\documentclass[12pt,a4paper]{article}
\usepackage[T1]{fontenc}
\usepackage{amssymb,amsfonts,amsmath,amsthm}
\usepackage[russian,english]{babel}
\usepackage{longtable}
\usepackage[cp1251]{inputenc}
\usepackage[all]{xy}
\usepackage{graphicx}
\usepackage{wrapfig}

\usepackage[colorlinks]{hyperref}
\usepackage{multirow}
\usepackage{hhline}

\usepackage{cancel}

\DeclareMathOperator{\rank}{\mathrm{rank}}

\DeclareMathOperator{\ind}{\mathrm{ind}}

\DeclareMathOperator{\Ad}{\mathrm{Ad}}

\DeclareMathOperator{\arcsinh}{arcsh}
\DeclareMathOperator{\cotg}{ctg}
\DeclareMathOperator{\tgg}{tg}

\newcommand{\ba}{\bar{\alpha}}

\newcommand{\diag}{\mathrm{diag}}
\newcommand{\R}{\mathbb{R}}
\newcommand{\CC}{\mathbb{C}}
\newcommand{\F}{\mathbb{F}}

\newcommand{\z}{\mathfrak{z}}
\newcommand{\<}{\langle}
\renewcommand{\>}{\rangle}

\newcommand{\NM}{\mathfrak{N}(M)}
\newcommand{\HH}{\mathfrak{H}}

\newcommand{\gf}{\mathfrak{g}^*}
\newcommand{\g}{\mathfrak{g}}
\newcommand{\h}{\mathfrak{h}}
\newcommand{\s}{\mathfrak{s}}
\newcommand{\kk}{\mathfrak{k}}
\newcommand{\sm}{\mathfrak{s}(M)}

\newcommand{\eee}{\mathfrak{e}}
\newcommand{\pp}{\mathfrak{p}}

\newcommand{\Ug}{{\cal{U}(\mathfrak{g})}}
\newcommand{\Zg}{\cal{Z}(\mathfrak{g})}

\newcommand{\Ol}{{\cal O}_\lambda}

\newcommand{\p}{\partial}
\newcommand{\DM}{\mathfrak{D}(M)}

\newcommand{\bal}{\bar{\alpha}}
\newcommand{\zxx}{\zeta_X}
\newcommand{\tx}{\theta_X}

\newcommand{\DD}{\Delta_2}

\newtheorem{proposition}{Proposition}
\newtheorem{theorem}{Theorem}

\newtheorem{definition}{Definition}
\newtheorem{lemma}{Lemma}

\theoremstyle{remark}
\newtheorem{remark}{Remark}

\newcommand{\pder}[2]{\frac{\partial\, #1}{\partial\, #2}}

\title{Harmonic Variables for Laplace Operators on Homogeneous Spaces}

\author{Shirokov I.V.\thanks{iv\_shirokov@mail.ru}\\
Omsk State Technical University, \\
644050, Omsk, Mira Ave., 11}

\begin{document}
\selectlanguage{english}
\maketitle

\begin{abstract}
\noindent \emph{A definition of harmonic variables for Laplace operators on Riemannian and pseudo-Riemannian spaces is introduced. Harmonic variables allow, in particular, the construction of particular (non-invariant group) solutions to a wide class of nonlinear differential equations. For Lie groups with a bi-invariant metric and a certain class of homogeneous spaces (e.g., symmetric spaces), an algorithm for constructing such variables is presented.}
\end{abstract}

\textbf{Keywords}: Lie algebra, Lie group, homogeneous space, Laplace-Beltrami operator.

\vspace{1cm}
\textbf{UDC}: 512.816.7

\section*{Introduction}

Consider the Laplace equation:
\begin{equation}\label{e1}
\left(\dfrac{\p^2 }{\p x^2}+\dfrac{\p^2 }{\p y^2}\right)u(x,y)=0.
\end{equation}

Recall that functions that are solutions to the Laplace equation are called \textit{harmonic}.

\begin{definition}
A curvilinear, generally complex-valued variable $q=q(x,y)$ will be called harmonic if for any arbitrary smooth function of this variable $v(q)$, the function $u(x,y)=v(q(x,y))$ is harmonic.
\end{definition}

\begin{remark}
One should not confuse the concept of a harmonic variable introduced here with the well-known \cite{Fok} term harmonic coordinates, where the equality
$$\frac{\p}{\p x^i} \left(\sqrt{|g|}g^{ij}(x)\right)=0$$
holds.
\end{remark}


It is well known that the function $u(x,y)=v(x+i y)$ for any analytic function of a complex argument $v(q)$ satisfies equation \eqref{e1}, i.e., the variable $q=x+i y$ is harmonic.

Another simple example is the Laplace equation in two-dimensional pseudo-Euclidean space:
\begin{equation}\label{e2}
\left(\dfrac{\p^2 }{\p t^2}-\dfrac{\p^2 }{\p x^2}\right)u(t,x)=0.
\end{equation}
Obviously, the variables $q_1=t+x$ and $q_2=t-x$ are harmonic for equation \eqref{e2}.

Let $(M,\textrm{ g})$ be a Riemannian (or pseudo-Riemannian) space with a metric $\textrm{ g}=g_{ij}(x)dx^idx^j$. Consider the Laplace-Beltrami equation on this space:
\begin{equation}
\Delta_2 u(x)=0,\quad \Delta_2\equiv \frac{1}{\sqrt{|g|}}\,
\dfrac{\p}{\p x^i} \sqrt{|g|} g^{ij}(x)\,\dfrac{\p}{\p x^j},\quad u\in C^\infty (M).
\end{equation}
For the Laplace-Beltrami equation on a Riemannian (or pseudo-Riemannian) space, one can also pose the problem of finding harmonic variables. Note that in the Riemannian case, any harmonic variable is complex.


Let us point out one application of harmonic variables. Consider a nonlinear differential equation of the form:
\begin{equation}\label{e51}
\frac{\p^2 u(t,x)}{\p t^2}-\Delta_2 u(t,x)=F(t,u(t,x)).
\end{equation}
If $q=q(x)$ is a harmonic variable, and $w=w(t,C_1,C_2)$ is the general solution of the ordinary differential equation:
\[
\frac{d^2 w}{d t^2}=F(t,w),
\]
where $C_i$ are arbitrary constants, then we can write down solutions to equation \eqref{e51} that contain arbitrary functions of the harmonic variable:
$$u(t,x)=w(t,C_1(q(x)),C_2(q(x)).$$
Note that such solutions, in general, are not invariant with respect to any symmetry group.

To distinguish Laplace operators on different spaces $M$, we will denote them by the symbol $\Delta_2(M)$. For example, the operator of equation \eqref{e1} is denoted by $\Delta_2(R^2)$, and that of equation \eqref{e2} by $\Delta_2(R^{1,1})$, etc.

As a non-trivial example, consider the Laplace-Beltrami equation on the two-dimensional sphere $S^2$:
\begin{equation*}
\Delta_2(S^2)u=\left( \frac{1}{\sin \theta}\frac{\p}{\p \theta}\sin \theta \frac{\p}{\p \theta}+
 \frac{1}{\sin^2\theta}\frac{\p^2}{\p \varphi^2}\right)u(\varphi,\theta)=0.
 \end{equation*}
By direct substitution, one can verify that the variable $q(\varphi,\theta)$ is harmonic:
\begin{equation*}
q(\varphi,\theta)=\varphi -  i\arcsinh ( \cotg \theta ).
\end{equation*}

As an application of this result, consider the nonlinear differential equation:
\[
\left(\pder{{}^2}{x^2}+\pder{{}^2}{y^2}+\pder{{}^2}{z^2}\right)u(x,y,z)=F(u(x,y,z)),
\]
which we write in the spherical coordinate system:
\begin{equation}\label{e52}
\left(\frac{1}{r^2}\pder{}{r}r^2\pder{}{r}+\frac{1}{r^2}\DD(S^2)\right)
u(r,\varphi,\theta)=F(u(r,\varphi,\theta)).
\end{equation}
Let $w=w(r,C_1,C_2)$ be the general solution of the ordinary differential equation
\[
\frac{1}{r^2}\frac{d}{d r}r^2\frac{d}{d r}w=F(w),
\]
then $u(r,\varphi,\theta)=w(r,C_1(q(\varphi,\theta)),C_2(q(\varphi,\theta)))$ is a particular solution of equation \eqref{e52}, containing two arbitrary functions of the combination of angular variables.

It is easy to show that the two-dimensional Laplace-Beltrami operator always admits a harmonic variable.

Note that for an arbitrary function $v(q)$ of a harmonic variable $q(x)$ such that the function $u(x)=v(q(x))$ is analytic in a domain $D\subset R^n$, the following equality holds:
\begin{equation}\label{e6a}
\oint\limits_{\p D}g^{ij}(x)\pder{v(q(x))}{x^j}\,dS_i=0,\quad dS_i=\sqrt{|g|}\, dx^1\wedge \dots \wedge dx^{i-1}\wedge \cancel{dx^i}\wedge dx^{i+1}\wedge \dots \wedge dx^n.
\end{equation}
For the two-dimensional Laplace operator \eqref{e1} and the harmonic variable $q=x^1+i x^2$, this equation represents Cauchy's theorem in the theory of functions of a complex variable. In this sense, harmonic variables are a generalization of complex variables. In this paper, we will not address issues related to equality \eqref{e6a} and other possible applications.

Let an arbitrary function $v$ of the variable $q(x)$ be harmonic. Then we have the equality:
\begin{equation}\label{e3}
\Delta_2 v(q(x))=v'(q)\Delta_2 q(x)+v''(q)\Delta_1[q]=0
\end{equation}
Here
\[
\Delta_1[q]\equiv g^{ij}(x)\frac{\p q(x)}{\p x^i}\frac{\p q(x)}{\p x^j}.
\]
Due to the arbitrariness of the function $v$, equality \eqref{e3} splits into two equations for the single unknown function $q(x)$:
\begin{eqnarray}\label{e4a}
\Delta_2\, q(x)=0;\\ \label{e4b}
\Delta_1[q(x)]=0.
\end{eqnarray}
Equation \eqref{e4a} shows that a harmonic variable is a harmonic function (the converse is not true). Equation \eqref{e4b} means that the harmonic variable must also satisfy the eikonal equation, which describes the propagation of electromagnetic waves in a Riemannian space in the geometric optics approximation.

Let us denote by $Symm(\Delta_{1,2}(M))$ the symmetry group of the system of equations \eqref{e4a}, \eqref{e4b}. This group contains as subgroups: the group of motions of the Riemannian space $(M,\textrm{g})$, the group of conformal transformations of this space, as well as the infinite group of transformations $q(x)\to \Phi(q(x))$. The latter means that any arbitrary function $\Phi(q(x))$ of a harmonic variable $q(x)$ is itself a harmonic variable.
Two harmonic variables $q_1(x),\; q_2(x)$ lying on the same $Symm(\Delta_{1,2}(M))$--orbit will be called \textit{equivalent}.

Consider the simplest example. The harmonic variable $q=x+iz$ for the three-dimensional flat Laplace operator $\Delta_2(R^3)=\p_x^2+\p_y^2+\p_z^2$, after a rotation in the $X-Y$ plane, transforms into the equivalent harmonic variable $\tilde{q}=x\cos\alpha+y\sin\alpha+i z$.



\begin{definition}
We will say that a set of functionally independent harmonic variables $q_1(x)$, $q_2(x)$, $\dots,q_s(x)$ forms a complex if any arbitrary smooth function $v$ of these variables $u(x)=v(q_1(x),q_2(x),\dots,q_s(x))$ is harmonic.
\end{definition}


The Laplace operator \eqref{e1} admits two harmonic variables: $q_1=x+iy,\ q_2=x-iy$. The Laplace operator \eqref{e2} also admits two harmonic variables: $q_1=t+x,\ q_2=t-x$. Obviously, in neither of these cases do the two harmonic variables form a complex. It is easy to provide an example of a complex: for the operator $\DD (R^4)$, the harmonic variables $q_1=x_1+i x_2,\ q_2=x_3+i x_4$ form a complex. Of course, such trivial complexes are of little interest to us.

The main goal of this paper is to describe classes of homogeneous spaces and metrics for which harmonic variables exist, and to present an algorithm for finding them.


\section{Some Preliminaries and Notation}

In this section, for the sake of completeness, we will present well-known facts from the theory of Lie groups and homogeneous spaces and fix the notation. Note immediately that all constructions in this paper are of a local nature.

\subsection{Lie Groups}

Let $G$ be a real $n$-dimensional Lie group, and $\g$ its Lie algebra. The group $G$ acts on itself by right $R$ and left $L$ translations; their differentials define left- and right-invariant vector fields $\xi_X(g),\; \eta_X(g) \in T_gG$:
\begin{equation*}
L_{g_1}g=g_1\,g,\quad \left(L_{g_1}\right)_* X=\xi_X(g_1);\quad
R_{g_1}g=g\,g_1,\quad \left(R_{g_1}\right)_* X=-\eta_X(g_1); \quad X\in \g\approx T_eG.
\end{equation*}

In the space of smooth functions $C^\infty(G)$ on the group $G$, left and right translations define the left $T^L$ and right $T^R$ regular representations:
\begin{equation}\label{e7}
T^L_{g_1} u(g)=u(g^{-1}_1g), \quad T^R_{g_1} u(g)=u(gg_1),\quad u(g)\in C^\infty(G).
\end{equation}

Choosing $g_1=\exp(t\,X),\ X\in\g$ in formula \eqref{e7} and differentiating these equalities with respect to the parameter $t$ at zero, we obtain representations $\tau^L,\;\tau^R$ of the algebra $\g$ by differential operators
$\xi_X,\ \eta_X$ in the space of functions $C^\infty(G)$:
\begin{eqnarray*}
\tau^R(X)u(g)&=&\left.\left(\frac{d}{dt}T^R_{\exp (t X)}u(g)\right)\right|_{t=0}=\xi_X(g)u(g),\\
\tau^L(X)u(g)&=&\left.\left(\frac{d}{dt}T^L_{\exp (t X)}u(g)\right)\right|_{t=0}=\eta_X(g)u(g),\quad
X\in\g.
\end{eqnarray*}
\begin{equation*}
[\xi_X,\xi_Y]=\xi_{[X,Y]},\quad [\eta_X,\eta_Y]=\eta_{[X,Y]},\quad [\xi_X,\eta_Y]=0,\quad X,Y\in\g.
\end{equation*}

We also recall the adjoint representation of the group $\Ad_g=(L_g)_*(R_{g^{-1}})_*: \g\to\g$.
\begin{equation*}
\Ad_{g_1}\Ad_{g_2}=\Ad_{g_1g_2},\quad \xi_i(g)=- (\Ad_g)_i^j\eta_j(g).
\end{equation*}
Here and below, $\{E_i\}$ is a basis of the algebra $\g$, $\xi_i\equiv\xi_{E_i},\ \eta_i\equiv\eta_{E_i}$.

Let us choose local coordinates $x=(x^1,\dots,x^n)$ in a neighborhood of the identity $U_e\subset G$ of the group $G$. We will denote by $g_x$ the group element with coordinates $x$. Canonical coordinates of the first kind $g_x=\exp\left(\sum_i x^i E_i\right)$ or second kind $g_x=\prod_{i}\exp(x^i E_i)$ are often used.
In local coordinates, the product in the group is defined by the composition function $\varphi(x,y)$:
\begin{equation*}
g_x g_y=g_z,\quad z^i=\varphi^i(x,y),\quad g_x,g_y,g_z\in U_e.
\end{equation*}
Without loss of generality, we assume that the zero value of the coordinates corresponds to the identity element: $g_0=e$. Let $\chi(x)$ denote the coordinates of the inverse element: $g_{\chi(x)}=g_x^{-1}$.
For the subsequent presentation, we will need expressions for invariant fields, forms, and measures in coordinate form:
\begin{gather}\label{e8}
\xi_k(g_x)=\xi_k^a(x)\frac{\p}{\p x^a},\quad \xi_k^a(x)=\left.\frac{\p \varphi^a(x,y)}{\p y^k}\right|_{y=0};\\ \nonumber
\eta_k(g_x)=\eta_k^a(x)\frac{\p}{\p x^a},\quad \eta_k^a(x)=\left.\frac{\p \varphi^a(\chi(y),x)}{\p y^k}\right|_{y=0}=\left.-\frac{\p \varphi^a(y,x)}{\p y^k}\right|_{y=0};\\ \nonumber
\omega^i(g_x)=\omega^i_k(x)\,d x^k,\ \omega^i_k(x)=(\|\xi(x)\|^{-1})^i_k;\quad
\sigma^i(g_x)=\sigma^i_k(x)\,d x^k,\ \sigma^i_k(x)=(\|\eta(x)\|^{-1})^i_k;\\
d\mu^L(g_x)=|\det \omega^i_j(x)|\,dx,\quad d\mu^R(g_x)=|\det \eta^i_j(x)|\,dx,\quad
dx\equiv dx^1\wedge\dots \wedge dx^n. \nonumber
\end{gather}
Here $E_k=\p/\p x^k|_{0}\in T_eG\approx \g;\ \omega^i,\;\sigma^i$ are left- and right-invariant forms, $d\mu^L,\;d\mu^R$ are left- and right-invariant measures:
\begin{equation*}
d\mu^L(g_1g)=d\mu^L(g),\quad d\mu^R(g g_1)=d\mu^R(g),\quad d\mu^L(g)=\Delta(g) d\mu^R(g),\quad \Delta(g)=|\det (\Ad_{g^{-1}})_i^j|.
\end{equation*}
Recall that a group is called \textit{unimodular} if $\Delta(g)=1$.

Let us recall a few more terms that will be useful.
Consider the space of smooth functions $C^\infty(\gf)$ of formal variables $f_1,\dots,f_n$ ($n=\dim\g$), which we will consider as coordinates of a generic linear functional $f\in\gf,\ f_i=\<f,E_i\>$. The group $G$ acts on the linear space $\gf$ by the coadjoint representation: $f\to \Ad_g^* f$ according to the rule: $\<\Ad_g^* f,X\>=\<f,\Ad_g X\>$, or in coordinates $(\Ad_g^* f)_i=(\Ad_g)_i^j f_j$. Functions $K(f)\in C^\infty(\gf)$ invariant under the coadjoint representation are called \textit{Casimir functions}.

The \textit{index} $\ind\g$ of an algebra $\g$ is the non-negative integer
\[
\ind\g=\inf_{\lambda\in \gf}\dim \g^\lambda,\quad \g^\lambda=
\{X\in\g\mid \<\lambda,[X,\g]\>=0\}.
\]
The index of an algebra coincides with the number of independent Casimir functions.
Note that the rank of a semi-simple algebra coincides with its index.

A \textit{polarization} \cite{Diksime} of an algebra $\g$ with respect to a linear functional $\lambda\in\gf$ is a subalgebra $\kk\subset\g^c$ such that
\begin{equation} \label{e9}
\<\lambda,[\kk,\kk]\>=0,\quad \dim\kk=(\dim\g+\dim\g^\lambda)/2.
\end{equation}

\subsection{Homogeneous Spaces}

Let $M$ be a homogeneous $m$-dimensional $G$-space, equivalent to the space of right cosets $M=G/H$, where $H$ is a closed subgroup of the group $G$, being the stationary subgroup of some distinguished point $x_0\in M$. The action of the group $G$ on the space $M$: $x\to xg$ generates a regular representation in the space of smooth functions $C^\infty(M)$:
\begin{equation}\label{e14}
T^M_gu(x)=u(xg),\quad T^M_{g_a}T^M_{g_b}=T^M_{g_ag_b}.
\end{equation}

In local coordinates, the action of the group is defined by the function $(xg_a)^\alpha=\psi^\alpha(x,a)$, and equality \eqref{e14} takes the form:
\begin{equation}\label{e15}
\psi^\alpha(\psi(x,a),b)=\psi^\alpha(x,\varphi(a,b)).
\end{equation}

Differentiating the regular representation \eqref{e14} at the identity of the group, we obtain a representation $\tau^M$ of the algebra $\g$ by vector fields on the manifold $M$:
\begin{multline}\label{e16}
\tau^M(X) u(x)=\left.\left(\frac{d}{dt}T^M_{\exp (t X)}u(x)\right)\right|_{t=0}=\zxx(x)u(x),
\quad \zxx(x)=\zxx^\alpha(x)\frac{\p}{\p x^\alpha},\\
 \zeta_{E_k}^\alpha(x)=\left.\frac{\p \psi^\alpha(x,a)}{\p a^k}\right|_{a=0}.
\end{multline}
In \cite{MMS}, a method is presented for constructing the fields $\zxx$ in local coordinates using the structure constants of the algebra $\g$ and the subalgebra $\h$.

Let us give an important formula for the subsequent exposition:
\begin{equation}\label{e16a}
{(T_g^M)}_{*}\zeta_i(x)=(\Ad_{g^{-1}})_i^k\zeta_k(xg)\longleftrightarrow
\zeta_i^\beta(x)\frac{\p (x g)^\alpha}{\p x^\beta}=
(\Ad_{g^{-1}})_i^k\zeta_k^\alpha(xg).
\end{equation}

Consider another homogeneous $G$-space $Q$, equivalent to the space of right cosets $Q=G/K$. The action of the group $G$ generates the corresponding regular representation:
\begin{equation*}
T^Q_gu(q)=u(qg),\quad g\in G;\quad q,qg\in Q,\quad u(q)\in C^\infty(Q),
\end{equation*}
and the representation $\tau^Q$ of the algebra $\g$ by fundamental vector fields $\tx$ on $Q$:
\begin{equation*}
\tau^Q(X)u(q)=\left.\left(\frac{d}{dt}T^Q_{\exp (t X)}u(q)\right)\right|_{t=0}=\tx(q)u(q),
\quad \tx(q) = \tx^{\bal} (q)  \frac{\p}{\p q^{\bal}},\quad X\in\g.
\end{equation*}

Recall the concept of a morphism of homogeneous spaces. Let $\dim M\geq\dim Q$. A mapping $I:\ M\to Q$ is called a \textit{morphism} if the following equality holds:
\begin{equation}\label{e19}
I^*\; T_g^Q=T_g^M\; I^*
\end{equation}
Here the mapping $I^*:\ C^\infty(Q)\to C^\infty(M)$ acts according to the rule: for an arbitrary function $v(q)\in C^\infty(Q)$, the function $I^*(v)(\cdot)=v(I(\cdot))\in C^\infty(M)$. In other words, equality \eqref{e19} means:
\[
v(I(x)g)=v(I(xg)),\quad \forall v(q)\in C^\infty(Q).
\]
Differentiating both sides of formula \eqref{e19} with respect to the parameters of the group element, we obtain:
\begin{equation}\label{e20}
dI(\zxx(x))=\tx(q),\quad q=I(x),\quad X\in\g.
\end{equation}
The last equality in coordinates has the form:
\begin{equation}\label{e21}
\zxx^\beta(x)\frac{\p q^{\bal}}{\p x^\beta}=\tx^{\bal}(q),\quad q=I(x),\quad X\in\g.
\end{equation}

Recall \cite{Onishcik} the following statement.
\begin{proposition}
A morphism $I:\ M\to Q,\ M=G/H,\; Q=G/K$ exists if and only if there exists an element $g\in G$ such that $gHg^{-1}\subset K$.
\end{proposition}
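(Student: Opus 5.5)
The proof works with the coset spaces directly. Write $M=G/H$ as right cosets $Hg_1$, with distinguished point $x_0=H$, and $Q=G/K$ as cosets $Kg_1$, with $G$ acting by right multiplication. A morphism $I$ is then exactly a $G$-equivariant map: $I(xg)=I(x)g$ for all $x\in M$ and $g\in G$. This equivalence comes from \eqref{e19}, because functions $v\in C^\infty(Q)$ separate points, at least locally. So the task is to classify equivariant maps between the two coset spaces.

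\emph{Necessity.} Suppose $I$ is a morphism and put $q_0=I(x_0)$. Since $G$ acts transitively on $Q$, we can write $q_0=Kg$ for some $g\in G$. For $h\in H$ we have $x_0h=x_0$, and equivariance gives $q_0h=I(x_0h)=I(x_0)=q_0$. Hence $Kgh=Kg$, so $ghg^{-1}\in K$. This shows $gHg^{-1}\subset K$; in other words, the stationary subgroup $H$ of $x_0$ is contained in the stationary subgroup $g^{-1}Kg$ of $q_0$.

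\emph{Sufficiency.} Suppose $gHg^{-1}\subset K$. Define $I(Hg_1)=Kgg_1$. The key step is that $I$ is well defined. If $Hg_1=Hg_1'$, then $g_1'=hg_1$ with $h\in H$, and
\[
Kgg_1'=Kghg_1=K(ghg^{-1})gg_1=Kgg_1,
\]
because $ghg^{-1}\in K$. Equivariance holds since $I(Hg_1g_2)=Kgg_1g_2=I(Hg_1)g_2$. Smoothness follows because $I$ is induced by the smooth map $g_1\mapsto Kgg_1$ on $G$, and $G\to G/H$ is a submersion admitting local sections. Finally, $v(I(x)g_2)=v(I(xg_2))$ for every $v$, which is \eqref{e19}.

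The only real obstacle is checking that \eqref{e19} is equivalent to pointwise equivariance. One direction is immediate. For the converse, since \eqref{e19} holds for all smooth $v$, we choose $v$ to be local coordinate functions on $Q$; these separate points, so $I(x)g=I(xg)$. Because the paper's constructions are local, it is also worth noting the infinitesimal version \eqref{e20}: it can be integrated along one-parameter subgroups to recover equivariance near the identity. That is enough for the local setting the paper uses.
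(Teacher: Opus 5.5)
The paper does not prove this proposition; it quotes it as a known fact from \cite{Onishcik}, so there is no internal argument to compare yours against. Your proof is the standard one and it is correct.

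For necessity, equivariance forces the stabilizer $H$ of $x_0$ into the stabilizer $g^{-1}Kg$ of $I(x_0)=Kg$, and this is exactly $gHg^{-1}\subset K$. For sufficiency, $Hg_1\mapsto Kgg_1$ is well defined precisely because $ghg^{-1}\in K$. It is clearly equivariant, and your local-section argument gives smoothness.

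One small precision concerns the passage from \eqref{e19} to pointwise equivariance. Chart coordinate functions are not themselves elements of $C^\infty(Q)$, and they only separate points within one chart. It is cleaner to say that smooth functions, using bump functions, separate points of the Hausdorff manifold $G/K$.

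Your explicit argument also does more than the bare citation. It carries over verbatim to the setting the paper actually uses later, $Q=G^c/\exp(\kk)$ with a complex (local) subgroup: there one simply takes $g\in G^c$. For $M=G$, i.e.\ $H=\{e\}$, the condition becomes vacuous, which is the claim made at the end of the proof of Theorem 2.
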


\subsection{Algebra of Invariant Differential Operators}

The representation $\tau^M$ of the algebra $\g$ naturally extends to a representation of the universal enveloping algebra
\begin{gather*}
\Ug=\R\oplus(\g)\oplus(\g \otimes \g)\oplus\dots \oplus (\g\otimes\dots \g) /J,
\end{gather*}
(here $J$ is the two-sided ideal generated by elements $X\otimes Y-Y\otimes X-[X,Y]$) as follows: $\tau^M(X\otimes Y)=\tau^M(X)\tau^M(Y)=\zxx \zeta_Y$. Moreover,
\[
\tau^M(X\otimes Y-Y\otimes X-[X,Y])=\zxx\zeta_Y-\zeta_Y\zxx-\zeta_{[X,Y]}=0 \Longleftrightarrow \tau^M (J)=0.
\]

The adjoint representation of the group $G$ on the Lie algebra $\g$: $X\to \Ad_g\,X$ extends to the enveloping algebra $\Ug$ by the rule: $X\otimes Y\to \Ad_g\,X\otimes \Ad_g\,Y$.

The images of the representation $\tau^M$ of the center $\Zg$ of the algebra $\Ug$ are called \textit{Casimir operators} on the homogeneous space $M$.

Let $\DM$ denote the \textit{algebra of invariant differential operators} on the homogeneous space $M$, i.e., differential operators $L(x,\p_x)$ commuting with all operators $\zxx$: $[L(x,\p_x),\zxx(x,\p_x)]=0$.

Obviously, any Casimir operator ${\Lambda}(\zeta)\in \tau^M\left(\Zg\right)$ belongs to the algebra $\DM$. For a wide class of homogeneous spaces, there exist invariant differential operators not originating from the representation of the enveloping algebra. In this paper, we will be interested in the class of homogeneous spaces for which the algebra of invariant operators is exhausted by Casimir operators, i.e., $\DM=\tau^M\left(\Zg\right)$.
Such homogeneous spaces are called \textit{commutative} \cite{Vinberg} or, according to the terminology of \cite{Shir-TMF}, spaces of \textit{zero defect} ($d(M)=0$). The condition for the space $M=G/H$ to belong to this class is as follows.
\begin{equation}\label{e22}
d(M)=\frac{1}{2}\sup_{\lambda\in \h^{\bot}}\rank\<\lambda,[\g,\g]\>-
\sup_{\lambda\in \h^{\bot}}\rank\<\lambda,[\g,\h]\>=0.
\end{equation}
Here
\[
\h^{\bot}=\{f\in\gf\mid \<f,\h\>=0\}.
\]

Note that all symmetric spaces satisfy condition \eqref{e22}, i.e., are commutative.

\begin{remark}
Strictly speaking, condition \eqref{e22} is a condition for \textit{weak commutativity} (the algebra of invariant functions on the cotangent bundle is generated by Casimir functions). However, it can be shown that if condition \eqref{e22} holds, every invariant differential operator is a Casimir operator. In other words, if $A_k\in \DM$ is an invariant differential operator of order $k$, then there exists an element $\Lambda_k$ of the center of the enveloping algebra of order $k$ such that $A_k=\tau^M(\Lambda_k)$.
\end{remark}


\section{Identities in the Enveloping Algebra}

\begin{definition}
An element of the kernel of the representation $\tau^M$ acting in the enveloping algebra $\Ug$ is called an identity on the homogeneous space $M$.
\end{definition}
In other words, an element of the enveloping algebra $w=W^{i_1\dots i_k}E_{i_1}\dots E_{i_k}$ is an identity if the differential operator $\tau^M(w)=W^{i_1\dots i_k}\zeta_{i_1}(x,\p_x)\dots \zeta_{i_k}(x,\p_x)=0$ (identically equals zero). Obviously, identities form a two-sided ideal in the algebra $\Ug$, which we will denote by $\NM=\ker(\tau^M)$. The number of elements generating this ideal is called the \textit{index} of the homogeneous space and is denoted by $i_M$.

\emph{Example 1. } $\blacktriangleleft$ As a simple example, consider the three-dimensional Euclidean space $R^3\approx E(3)/SO(3)$. Here $E(3)$ is the group of motions of Euclidean space, consisting of rotations and parallel translations. The corresponding Lie algebra $\eee(3)=\{E_{ij},E_i\}$ is realized by the fundamental vector fields:
\[
\zeta_{12}=x\frac{\p}{\p y}-y\frac{\p}{\p x},\quad
\zeta_{13}=x\frac{\p}{\p z}-z\frac{\p}{\p x},\quad
\zeta_{23}=y\frac{\p}{\p z}-z\frac{\p}{\p y},\quad \zeta_1=\frac{\p}{\p x},
\quad \zeta_2=\frac{\p}{\p y}, \quad \zeta_3=\frac{\p}{\p z},
\]
Here $\tau^{R^3}(E_{ij})=\zeta_{ij},\ \tau^{R^3}(E_{i})=\zeta_{i},\ i,j=1,2,3$.
The center of the enveloping algebra $\Zg$ is generated by two elements:
$$
{\Lambda}_1=E_1^2+E_2^2+E_3^2,\quad {\Lambda}_2=E_{12}E_3-E_{13}E_2+E_{23}E_1.
$$
At the same time:
\[
\tau^{R^3}({\Lambda}_1)=\Delta_2(R^3)=\frac{\p^2}{\p x^2}+\frac{\p^2}{\p y^2}+\frac{\p^2}{\p z^2},\quad \tau^{R^3}({\Lambda}_2)=0.
\]
Thus, the element of the enveloping algebra ${\Lambda}_2$ is an identity on the three-dimensional Euclidean space (and there are no other generators, i.e., $i_{R^3}=1$).
$\blacktriangleright$

Let us represent the linear space $\g$ as a direct sum of linear subspaces (not subalgebras) $\g=\h+\pp$ and let $\{E_{\hat{\alpha}}\}$ be a basis of the subalgebra $\h$, where $\hat{\alpha}=1,\dots,\dim\h$, and $\{E_{\bar{\alpha}}\}$ ($\bar{\alpha}=\dim\h+1,\dim\h+2,\dots,\dim\g$) be a basis of the complementary space $\pp$. Below, in the algebra $\Ug$, we will use a basis consisting of monomials in which the basis elements of the algebra $\g$ are ordered in ascending order of their indices (lexicographic ordering), i.e., a general element of the enveloping algebra has the form:
\begin{equation*}
A=\sum_{k\geq 0}\sum_{i_1\leq i_2\leq \dots \leq i_k} A^{i_1\dots i_k}E_{i_1}\dots E_{i_k}\in\Ug.
\end{equation*}

Let us introduce the right ideal $\HH=\h\, \Ug$, which is generated by the subalgebra $\h$ and consists of elements of the form
$$
\HH=\{\sum_i  Y_i\otimes P_i\in\Ug\mid {Y}_{i} \in\h, \quad P_i\in\Ug\}.
$$




\begin{theorem}
Every identity belongs to an $\Ad_g$--invariant subspace of the space $\HH$. The converse is also true: every element of a $G$--invariant subspace of the ideal $\HH$ is an identity. In particular, any element of the algebra $\HH\cap\Zg$ is an identity, i.e., $\HH\cap\Zg\subset\NM$.
\end{theorem}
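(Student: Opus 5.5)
The approach is to translate the vanishing of a differential operator into a pointwise condition at each point of $M$, and then use the transitivity of $G$ to move every point to the distinguished point $x_0$. Throughout, $\tau^M$ is the algebra homomorphism $\Ug\to$ (differential operators on $M$).

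\textbf{Step 1: the kernel at $x_0$ is exactly $\HH$.} I want to show that an element $A\in\Ug$ satisfies $(\tau^M(A)u)(x_0)=0$ for all $u\in C^\infty(M)$ if and only if $A\in\HH=\h\,\Ug$.

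For sufficiency, note that $\zeta_Y(x_0)=0$ for $Y\in\h$, because $\h$ is the stationary subalgebra of $x_0$. Hence for $A=\sum_i Y_i\otimes P_i$ we get
\[
(\tau^M(A)u)(x_0)=\sum_i \zeta_{Y_i}(x_0)\bigl(\tau^M(P_i)u\bigr)=0.
\]
For necessity, use the ordered PBW basis. Place the $\h$-indices $\hat\alpha$ first in the ordering, so that $\Ug=\HH\oplus S$, where $S$ is spanned by ordered monomials in $E_{\bar\alpha}$ alone. It then suffices to show that no nonzero $A\in S$ annihilates $C^\infty(M)$ at $x_0$. Work in the chart $x=x_0\exp(\sum_{\bar\alpha} y^{\bar\alpha}E_{\bar\alpha})$ (or second-kind coordinates) on $\exp\pp$. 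At $x_0$ the fields $\zeta_{\bar\alpha}$ form a frame, and the ordered monomials $\zeta_{\bar\alpha_1}\cdots\zeta_{\bar\alpha_k}|_{x_0}$ have leading symbols equal to the corresponding symmetrized monomials in $\p_{y}$. These are linearly independent jets at $x_0$. An induction on the top order then shows that $A\in S$ with $\tau^M(A)|_{x_0}=0$ must vanish.

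This linear-independence argument is the step I expect to be the main technical obstacle. The difficulty is the lower-order corrections. My plan is to argue by the filtration: take the top-degree part of $A$, whose principal symbol at $x_0$ is a nonzero polynomial in the frame covectors, and conclude that it vanishes by independence of polynomial functions on $\pp^*\cong T^*_{x_0}M$.

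\textbf{Step 2: transport to an arbitrary point.} Formula \eqref{e16a} says that $T^M_g$ intertwines $\tau^M(X)$ with $\tau^M(\Ad_{g^{-1}}X)$. Extended multiplicatively, this gives
\[
(\tau^M(A)u)(x_0g)=\bigl(\tau^M(\Ad_{g^{-1}}A)\,T^M_gu\bigr)(x_0)
\]
(up to the precise convention for $\Ad$). Since $T^M_g$ is bijective on functions, $\tau^M(A)$ vanishes at $x_0g$ if and only if $\Ad_{g^{-1}}A\in\HH$, by Step 1. By transitivity, $A\in\NM$ if and only if $\Ad_gA\in\HH$ for all $g\in G$.

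\textbf{Step 3: conclusion.} Suppose $A\in\NM$. By Step 2 the whole orbit $\Ad_GA$ lies in $\HH$, so the span $V$ of $\Ad_GA$ is an $\Ad$-invariant subspace of $\HH$ containing $A$. This proves the first claim.

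Conversely, if $V\subset\HH$ is $G$-invariant and $A\in V$, then $\Ad_gA\in V\subset\HH$ for every $g$, so Step 2 gives $A\in\NM$.

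Finally, each $z\in\Zg$ is $\Ad_G$-invariant, at least for connected $G$. This holds locally, since $\Zg$ is annihilated by $\mathrm{ad}$; all constructions in the paper are local. So if $z\in\HH\cap\Zg$, the line $\R z$ is an invariant subspace of $\HH$, and therefore $z\in\NM$.
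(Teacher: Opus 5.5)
Your proof is correct and uses the same ingredients as the paper: $\zeta_Y(x_0)=0$ for $Y\in\h$, the transport formula \eqref{e16a} (which the paper uses, via the representation matrix $T_A^B(g)$, for the converse direction), and the leading-symbol induction at $x_0$ in the frame $\zeta_{\bar\alpha}(x_0)$ to show that an element whose image vanishes at $x_0$ lies in $\HH$. The one difference is organizational. You combine these into the characterization $A\in\NM \iff \Ad_gA\in\HH$ for all $g$, and take the span of $\Ad_G A$ as the invariant subspace. The paper instead builds that subspace from iterated commutators $[X,\Gamma]$ inside the two-sided ideal $\NM$ and proves $\NM\subset\HH$ separately, so your version makes the $G$-invariance immediate rather than assembled one-parameter subgroup at a time.
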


\emph{Proof}. $\blacktriangleleft$ Let for the homogeneous space $M=G/H$, the subgroup $H$ be the stationary subgroup of a distinguished point $x_0\in M$. This means that
\begin{equation}\label{e23}
\zeta_Y(x_0)=0,\quad Y\in\h.
\end{equation}

Let $V$ denote an $\Ad_g$--invariant subspace of the ideal $\HH$, and let $\{\Gamma_A\}$ be its basis. Let us show that every element of this subspace is an identity.

Consider the basis elements $\Gamma_A=\Gamma_A^{i_1i_2\dots i_s}E_{i_1}E_{i_2}\dots E_{i_s}$ ($i_1\leq i_2\leq \dots i_s$) of the space $V$. Due to the condition $\Gamma_A\in \HH$, we have: $\Gamma_A^{\bar{\alpha}i_2\dots i_s}=0$, i.e., $\Gamma_A=\Gamma_A^{\hat{\alpha}i_2\dots i_s}E_{\hat{\alpha}}E_{i_2}\dots E_{i_s}\ (E_{\hat{\alpha}}\in\h)$. By assumption, the space $V$ is a representation space of the group $G$, which implies the existence of a matrix $T_A^B(g)$ such that
\begin{equation}\label{e24}
\Gamma_A^{i_1i_2\dots i_s}(\Ad_g)_{i_1}^{j_1}\dots (\Ad_g)_{i_s}^{j_s}=T_A^B(g)
\Gamma_B^{j_1i_2\dots j_s}
\end{equation}
Set $x=x_0$ in formula \eqref{e16a}:
\begin{equation*}
\zeta_i^\beta(x_0)\frac{\p (x_0 g)^\alpha}{\p x_0^\beta}=
(\Ad_{g^{-1}})_i^k\zeta_k^\alpha(x_0g)
\end{equation*}
In the last formula, we introduce the notation $x=x_0 g$ and, multiplying both sides of the equality by the matrix $\Ad_g$, we finally obtain:
\begin{equation}\label{e25}
\zeta_i^\alpha(x)\pder{}{x^\alpha}=(\Ad_g)_i^j\zeta_j^\beta(x_0)\left(\pder{x^\alpha}{x_0^\beta}\right)\pder{}{x^\alpha}
\end{equation}

From formulas \eqref{e25}, \eqref{e24} we have:
\begin{gather*}
\tau^M(\Gamma_A)=\Gamma_A^{i_1\dots i_s}\;\zeta_{i_1}^{\alpha_1}(x)\pder{}{x^{\alpha_1}}\dots \zeta_{i_s}^{\alpha_s}(x)\pder{}{x^{\alpha_s}}=
\Gamma_A^{i_1i_2\dots i_s}(\Ad_g)_{i_1}^{j_1}\dots (\Ad_g)_{i_s}^{j_s}\times \\
\times \zeta_{j_1}^{\beta_1}(x_0)\left(\pder{x^{\alpha_1}}{x_0^{\beta_1}}\right)\pder{}{x^{\alpha_1}}\dots
\zeta_{j_s}^{\beta_s}(x_0)\left(\pder{x^{\alpha_s}}{x_0^{\beta_s}}\right)\pder{}{x^{\alpha_s}} = T_A^B(g)\Gamma_B^{i_1 i_2\dots i_s}\times \\
\times \zeta_{i_1}^{\beta_1}(x_0)\left(\pder{x^{\alpha_1}}{x_0^{\beta_1}}\right)\pder{}{x^{\alpha_1}}\dots
\zeta_{i_s}^{\beta_s}(x_0)\left(\pder{x^{\alpha_s}}{x_0^{\beta_s}}\right)\pder{}{x^{\alpha_s}}=T_A^B(g)\Gamma_B^{\hat{\alpha} i_2\dots j_s}\times \\
\times \zeta_{\hat{\alpha}}^{\beta_1}(x_0)\left(\pder{x^{\alpha_1}}{x_0^{\beta_1}}\right)\pder{}{x^{\alpha_1}}\;
\zeta_{i_2}^{\beta_2}(x_0)\left(\pder{x^{\alpha_2}}{x_0^{\beta_2}}\right)\pder{}{x^{\alpha_2}} \dots
\zeta_{i_s}^{\beta_s}(x_0)\left(\pder{x^{\alpha_s}}{x_0^{\beta_s}}\right)\pder{}{x^{\alpha_s}}=0.
\end{gather*}
The equality to zero at the end of this chain of formulas follows from formula \eqref{e23}: $\zeta_{\hat{\alpha}}^{\beta_1}(x_0)=0$.

Now let $\Gamma$ be an identity; let us show that this element of the enveloping algebra belongs to an invariant subspace of the ideal $\HH$.

Let $(\Ug)_{k\geq 0}$ be the canonical filtration of the algebra $\Ug$, and let $\Gamma\in {\cal{U}}_k(\g)\cap \NM$ for some $k$. Obviously, for any $X\in\g$:
$$
\Gamma_1=[X,\Gamma]\in {\cal{U}}_k(\g)\cap \NM,\quad
\Gamma_2=[X,\Gamma_1]\in {\cal{U}}_k(\g)\cap \NM,\dots
$$
We construct a subspace $V\subset {\cal{U}}_k(\g)\cap \NM$ invariant under the one-parameter subgroup $\exp(t\,X)$ with basis elements $\Gamma,\;\Gamma_1,\dots$. Due to the finite-dimensionality of the space ${\cal{U}}_k(\g)$, the subspace $V$ is also finite-dimensional. Choosing another element of the algebra $X_1\in\g$ and performing the same procedure, we obtain a $G$--invariant subspace $V$ of the space ${\cal{U}}_k(\g)\cap \NM$ containing the original identity $\Gamma$.

Now let us show that the identity $\Gamma$ belongs to the ideal $\HH$. Since $\pp\approx T_{x_0}M$, we can choose coordinates and their numbering such that
$$
\zeta_{E_{\bar{\alpha}}}(x_0)=\zeta_{\bar{\alpha}}^\alpha(x_0)\pder{}{x^\alpha}\bigg|_{x=x_0}=\pder{}{x^{\bar{\alpha}}}\bigg|_{x=x_0}\in T_{x_0}M,\quad E_{\bar{\alpha}}\in \pp.
$$
Let the identity $\Gamma$ have some order $k\geq 1$, i.e.
\[
\Gamma=\sum_{i_1\leq i_2\leq\dots \leq i_k}\Gamma^{i_1i_2\dots i_k}E_{i_1}\dots E_{i_k}+\sum_{i_1\leq i_2\leq\dots \leq i_{k-1}}\Gamma^{i_1i_2\dots i_{k-1}}E_{i_1}\dots E_{i_{k-1}}+\cdots \in {\cal{U}}_k(\g)\cap \NM.
\]
We will prove that the identity belongs to the ideal $\HH$ by induction on the order $k$. Assume that not all monomials of degree $k$ entering the identity $\Gamma$ belong to the ideal $\HH$. Then
\begin{equation}\label{e27}
\tau^M(\Gamma)\bigg|_{x=x_0}=\sum_{\dim \h<\bar{\alpha}_1\leq \bar{\alpha}_2\leq\dots \bar{\alpha}_k}\Gamma^{\bar{\alpha}_1 \bar{\alpha}_2\dots \bar{\alpha}_k}
\pder{}{x^{\bar{\alpha}_1}}\dots \pder{}{x^{\bar{\alpha}_k}}\bigg|_{x=x_0}+A_{k-1}=0.
\end{equation}
(In this formula, $A_{k-1}$ is a differential operator of order $k-1$).
The equality to zero of a differential operator is equivalent to the vanishing of all coefficients of the derivative symbols, i.e., $\Gamma^{\bar{\alpha}_1 \bar{\alpha}_2\dots \bar{\alpha}_k}=0$. In other words, there are no monomials of degree $k$ in the identity that do not belong to the ideal $\HH$. Next, taking this into account, we write out the expression for $\tau^M(\Gamma)\big|_{x=x_0}$. We obtain formula \eqref{e27} in which we must replace $k$ by $k-1$. From this follows the equality to zero of the coefficients $\Gamma^{\bar{\alpha}_1 \bar{\alpha}_2\dots \bar{\alpha}_{k-1}}$.
Continuing this process, we conclude that there are no terms entering the identity that do not belong to the ideal $\HH$. $\blacktriangleright$

Note that any $G$--invariant subspace of the right ideal $\HH$ is a two-sided ideal.

Let us prove a lemma.

\begin{lemma}
Let $\lambda\in \gf$ be a non-degenerate linear functional on the algebra $\g$, $\kk$ its polarization, $K$ a local subgroup of the group $G$ with Lie algebra $\kk$, and $Q=G/K$ a homogeneous space. Then any element of the center $\Lambda\in \Zg$ is an identity on the space $Q$, i.e., $\tau^Q(\Lambda)=0$.
\end{lemma}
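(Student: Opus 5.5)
The plan is to reduce the statement to Theorem~1: it suffices to show that every homogeneous element of the center (i.e. every $\Lambda\in\Zg$ without a constant term) lies in the right ideal $\HH_\kk=\kk\,\Ug$ built from the (complexified) subalgebra $\kk$. Since $\Zg$ is $\Ad_g$-invariant, Theorem~1 then gives $\tau^Q(\Lambda)=0$. I read ``non-degenerate'' as ``generic'', i.e. $\dim\g^\lambda=\ind\g$. Then $\dim\kk=(n+\ind\g)/2$, and hence
$$\dim Q=\dim\kk^\bot=\frac{n-\ind\g}{2},\qquad n=\dim\g.$$
The constants in $\Zg$ are trivially excluded, so ``any element of the center'' should be understood modulo constants.

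\textbf{Step 1 (symbol level).} Let $K(f)$ be the symbol of $\Lambda$ in $S(\g)\simeq C^\infty(\gf)$. It is a Casimir function, and I will take it homogeneous and nonconstant. I will show that $K$ vanishes on $\kk^\bot\subset\gf^{c}$. Consider the map
$$G\times\kk^\bot\to\gf,\qquad (g,f)\mapsto \Ad_g^*f.$$
The subspace $\kk^\bot$ is $\Ad^*_K$-invariant, so this map factors through $G\times_K\kk^\bot$. Hence its image $C$ has dimension at most $\dim G/K+\dim\kk^\bot=n-\ind\g$, which equals the dimension of a generic coadjoint orbit. Moreover $C$ is $G$-invariant and a cone containing $0$, because $\kk^\bot$ is linear.

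Either $C$ contains generic points or it does not. In the first case, near such points $C$ is a finite union of generic orbits, so the $\ind\g$ independent Casimir functions are locally constant on the irreducible cone $C$. In the second case, $C$ lies in the singular set, where the Casimirs are functionally dependent and the same dimension count applies on the strata. In both cases $K|_C$ is constant, and homogeneity together with $0\in C$ forces $K|_{\kk^\bot}=0$.

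\textbf{Step 2 (from symbol to ideal).} A polynomial on $\gf$ vanishing on the linear subspace $\kk^\bot$ lies in the ideal $\kk\,S(\g)$. After symmetrization, the leading term of $\Lambda$ therefore lies in $\kk\,\Ug$ modulo ${\cal U}_{k-1}(\g)$. Write $\Lambda=\Lambda'+R$ with $\Lambda'\in\kk\,\Ug$ and $R$ of lower order. I would then run induction on order, exactly as in the second half of the proof of Theorem~1, but carried out for the operator $\tau^Q(\Lambda)$ evaluated at the base point $x_0$. The point is that $\tau^Q(\Lambda)$ is an invariant operator, so it vanishes identically as soon as it vanishes at $x_0$, where $\zeta_Y(x_0)=0$ for $Y\in\kk$.

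\textbf{Main obstacle.} The hard step is making the induction close. The remainder $R$ need not be central, so Step~1 cannot simply be reapplied to it. To handle this, I would use $\Ad$-invariance to replace $R$ by its projection onto the invariants, which is legitimate because $\Lambda$ and $\kk\,\Ug$-parts can be averaged over a compact real form or handled via Harish-Chandra-type symmetrization in the complex setting. I would then argue that $\tau^Q(R)$ is an invariant operator on $Q$ of lower order whose symbol again comes from a Casimir function. This should let the descent continue down to order zero. At order zero the resulting constant must be shown to vanish: the surviving scalar would be the value of the Casimir on $\lambda$-shifted data, and fixing that normalization (or absorbing it into the statement ``modulo constants'') is exactly where I expect the real difficulty to lie.
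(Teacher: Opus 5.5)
Your reduction to Theorem~1 is valid, but it gains nothing. By the two directions of Theorem~1, $\tau^Q(\Lambda)=0$ is equivalent to $\Lambda\in\kk\,\Ug$, so the whole content of the lemma is still to be proved.

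The first genuine gap is in Step~1. Your argument uses only that $\kk$ is a subalgebra of dimension $(\dim\g+\ind\g)/2$. It never uses the polarization condition $\langle\lambda,[\kk,\kk]\rangle=0$, and the ``second case'' is simply asserted. In that generality the claim is false:
\begin{itemize}
\item Take $\g=\mathfrak{gl}_2$, so $\ind\g=2$, and $\kk=\mathfrak{sl}_2$, which has dimension $3=(4+2)/2$.
\item Then $\kk^\bot$ is the line spanned by the trace functional, so $C=\kk^\bot$ consists of $\Ad^*$-fixed points.
\item The linear Casimir given by the identity matrix takes the value $2c$ at $c\,\mathrm{tr}$. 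It is neither constant nor zero on $C$, and $\tau^Q$ of it is a nonzero vector field on $GL_2/SL_2$.
\end{itemize}

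The missing idea is to use $\lambda$:
\begin{itemize}
\item Since $\kk$ is maximal isotropic for $\langle\lambda,[\cdot,\cdot]\rangle$, it contains $\g^\lambda$.
\item Hence $\kk\cdot\lambda\subset\kk^\bot$ has dimension $\dim\kk-\dim\g^\lambda=\dim\kk^\bot$.
\item So $K\cdot\lambda$ is open in the affine space $\lambda+\kk^\bot$, and every Casimir is constant on all of $\lambda+\kk^\bot$.
\item The top coefficient in $t$ of $K(\lambda+tf)\equiv K(\lambda)$ then gives $K(f)=0$ for $f\in\kk^\bot$.
\end{itemize}
This is exactly what the paper extracts from the map $\mu$ of \eqref{e31}. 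That map is affine in $p$, and at the base point $\mu(0,p,\lambda)\in\lambda+\kk^\bot$. The top-degree part in $p$ of $\Lambda(\mu)=\Lambda(\lambda)$ is \eqref{e34}.

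The second gap is the one you flag yourself, and your proposed remedy does not work:
\begin{itemize}
\item $\tau^Q(R)$ is not an invariant operator. Only $\tau^Q(\Lambda)$ is; $\tau^Q(R)$ merely agrees with it at the base point.
\item Averaging over a compact real form is unavailable when $\g$ is not reductive.
\item Replacing $R$ by an $\Ad$-average changes precisely the base-point values that determine $\tau^Q(\Lambda)$.
\end{itemize}
What you need is this: the invariant operator $\tau^Q(\Lambda)$, which after Step~1 has order at most $k-1$, is again of the form $\tau^Q(\Lambda_{k-1})$ with $\Lambda_{k-1}\in\Zg$ of order at most $k-1$. Then Step~1 can be reapplied. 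The paper obtains this from the commutativity of $Q=G/K$ (condition \eqref{e22} together with Remark~2). It then descends to order one, where \eqref{e34} gives $\Lambda^i\theta_i=0$ directly. Without an input of this kind your induction does not close.

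The order-zero constant you single out is not actually a difficulty. If $\Lambda$ has no constant term, $\tau^Q(\Lambda)$ is a sum of products of vector fields and so annihilates constants. Any invariant zeroth-order remainder must therefore vanish.
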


\emph{Proof of Lemma 1.} $\blacktriangleleft$ Let $\tau^Q(X)=\tx=\tx^{\alpha}(q)\p/\p q^\alpha$ be the fundamental vector fields $[\tx,\theta_Y]=\theta_{[X,Y]}$. Then the functions $\tx(q,p)=\tx^{\alpha}(q)p_\alpha$ generate a representation of the Lie algebra $\g$ in the space of functions on the cotangent bundle $T^*Q$ with the canonical Poisson bracket:
\[
\{\tx(q,p),\theta_Y(q,p)\}\equiv\frac{\p\tx(q,p)}{\p p_\alpha}
\frac{\p \theta_Y(q,p)}{\p q^\alpha}- \frac{\p\theta_Y(q,p)}{\p p_\alpha}
\frac{\p \theta_X(q,p)}{\p q^\alpha}=\theta_{[X,Y]}(q,p),\quad X,Y\in\g.
\]

Each polynomial Casimir function corresponds one-to-one to an element of the center of the enveloping algebra
$$
\Lambda_k(f)=\Lambda^{i_1i_2\dots i_k}f_{i_1}f_{i_2}\dots f_{i_k}\longleftrightarrow\Lambda_k=\Lambda^{i_1i_2\dots i_k}E_{i_1}E_{i_2}\dots E_{i_k}\in\Zg
$$ (here in the enveloping algebra we use the symmetrized ordering of basis elements).

In works \cite{Do}, \cite{KP}, \cite{BS-2003}, \cite{BS-2009}, it is proved that the existence of a polarization $\kk$ of a non-degenerate linear functional $\lambda\in \gf$ allows one to introduce local canonical coordinates $(q,p)$ on the orbit $\Ol\subset\gf$ by constructing a mapping $\mu:\ T^*Q\to \Ol$ of the following form.
\begin{equation}\label{e31}
\<\mu(q,p,\lambda),X\>=\mu_X(q,p,\lambda)=\tx^{\alpha}(q)p_\alpha+\chi_X(q,\lambda),\quad
\mu_X(0,0,\lambda)=\<\lambda,X\>,\quad X\in\g.
\end{equation}
Moreover,
\begin{equation*}
\{\mu_X(q,p,\lambda),\mu_Y(q,p,\lambda)\}=\mu_{[X,Y]}(q,p,\lambda),\quad X,Y\in\g.
\end{equation*}


By definition, Casimir functions are constant on each orbit of the coadjoint representation $\Ol$. Consequently, the restriction of the Casimir function to the orbit $\Ol$ is a constant $\Lambda_k(\mu(q,p,\lambda))=\Lambda_k(\lambda)$:
\begin{equation}\label{e33}
\Lambda^{i_1i_2\dots i_k}\mu_{i_1}(q,p,\lambda)\mu_{i_2}(q,p,\lambda)\dots \mu_{i_k}(q,p,\lambda)=\Lambda^{i_1i_2\dots i_k}\lambda_{i_1}\lambda_{i_2}\dots \lambda_{i_k}.
\end{equation}
Equation \eqref{e33} must hold identically for arbitrary values of the variables $p,q$ from an open neighborhood of zero. Taking into account definition \eqref{e31}, for the highest powers of the polynomial in the variables $p_\alpha$ we obtain the equalities
\begin{equation}\label{e34}
\Lambda^{i_1i_2\dots i_k}\theta_{i_1}^{\alpha_1}(q)\theta_{i_2}^{\alpha_2}(q)\dots \theta_{i_k}^{\alpha_k}(q)=0,
\end{equation}
which must hold for arbitrary values of the variables $q^\alpha$.

Taking into account equality \eqref{e34}, we have:
\begin{multline*}
\tau^Q(\Lambda_k)=\Lambda^{i_1i_2\dots i_k}\left(\theta_{i_1}^{\alpha_1}(q)\frac{\p}{\p q^{\alpha_1}}\right)\left(\theta_{i_2}^{\alpha_2}(q)\frac{\p}{\p q^{\alpha_2}}\right)\dots \left(\theta_{i_k}^{\alpha_k}(q)\frac{\p}{\p q^{\alpha_k}}\right)=\\
=\left(\Lambda^{i_1i_2\dots i_k}\theta_{i_1}^{\alpha_1}(q)\theta_{i_2}^{\alpha_2}(q)\dots \theta_{i_k}^{\alpha_k}(q)\right) \frac{\p}{\p q^{\alpha_1}}\frac{\p}{\p q^{\alpha_2}}\dots \frac{\p}{\p q^{\alpha_k}}+A_{k-1}=A_{k-1}.
\end{multline*}
Here $A_{k-1}$ is an invariant differential operator of order $k-1$.
It can be verified that for the polarization $\kk$ and the homogeneous space $Q$, condition \eqref{e22} holds. According to Remark 2, this means that the operator $A_{k-1}\in \mathfrak{D}(Q)$ is a Casimir operator: $A_{k-1}=\tau^Q(\Lambda_{k-1})$ (or both sides of this equality are zero). Continuing this reasoning, we obtain the chain of equalities:
\[
\tau^Q(\Lambda_{k})=\tau^Q(\Lambda_{k-1})=\dots=\tau^Q(\Lambda_{1})=0.
\]

Let us comment on the last equality in this chain. Equation \eqref{e34} for $k=1$ has the form: $\Lambda^i\theta_i^\alpha(q)=0$. Hence
$
\tau^Q(\Lambda_1)=\Lambda^i\theta_i^\alpha(q){\p}/{\p q^\alpha}=0.
$
$\blacktriangleright$

\section{Harmonic Variables for Laplace Operators on Lie Groups}

Suppose there exists an invariant bilinear symmetric non-degenerate form on the algebra $\g$:
\[
B(X,Y)=B(Y,X)=B(\Ad_g X,\Ad_g Y)\in\R,\quad X,Y\in\g.
\]
This form defines a two-sided invariant Riemannian metric on the Lie group $G$:
\begin{gather}\label{e28}
\textrm{ g}=B_{ij}\omega^i(g_x)\omega^j(g_x)=
B_{ij}\sigma^i(g_x)\sigma^j(g_x)=g_{kl}(x)dx^kdx^l,\\ \nonumber
g_{kl}(x)=B_{ij}\omega^i_k(x)\omega^j_l(x)=B_{ij}\sigma^i_k(x)\sigma^j_l(x),\quad B_{ij}=B(E_i,E_j),\quad \det B_{ij}\neq 0.
\end{gather}
Any semi-simple group possesses an invariant form $B$ (the Killing tensor). However, there also exist non-semi-simple groups admitting this form, for example, some solvable groups. Note that any group with such an invariant form is unimodular.

The non-degenerate invariant form $B$ defines an element of the center of the enveloping algebra:
\[
{\Lambda}=B^{ij}E_iE_j\in {\Zg},\quad B^{ij}= (B_{ij})^{-1}.
\]
The Casimir operator
$\tau^R({\Lambda})=\tau^L({\Lambda})=B^{ij}\xi_i(g)\xi_j(g)=B^{ij}\eta_i(g)\eta_j(g)$
coincides with the Laplace operator on the Lie group with the metric \eqref{e28}:
\begin{equation}\label{e29}
\Delta_2(G)=B^{ij}\xi_i(g)\xi_j(g).
\end{equation}
The last equality is due to the unimodularity property of the group. For a non-unimodular group, the operator $C^{ij}\xi_i(g)\xi_j(g)$ differs from the Laplace operator with the left-invariant metric $\textrm{g}=C_{ij}\omega^i(g)\omega^j(g)$ by a first-order differential operator.

\begin{theorem}
For the Laplace operator \eqref{e29} on the Lie group $G$, there exists a complex of harmonic variables consisting of $(1/2)(\dim\g-\ind\g)$ generators.
\end{theorem}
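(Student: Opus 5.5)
The plan is to build the complex of harmonic variables from a morphism of homogeneous spaces $I:\ G\to Q=G/K$, where $\kk$ is a polarization of a generic (non-degenerate) functional $\lambda\in\gf$. Then apply Lemma 1 to the Casimir element $\Lambda=B^{ij}E_iE_j$. Since $\dim\g^\lambda=\ind\g$ for generic $\lambda$, condition \eqref{e9} gives
\[
\dim Q=\dim\g-\dim\kk=\tfrac12(\dim\g-\ind\g),
\]
which is exactly the claimed number of generators. If $\kk\subset\g^c$ is genuinely complex, $K$ is taken as a local complex subgroup and $Q$ as a local complex homogeneous space. This is why the harmonic variables are in general complex, and all constructions stay local.

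First I will construct the morphism. The group $G$ viewed as a homogeneous space is $G/\{e\}$, and $\{e\}\subset K$ trivially. So by Proposition 1 there is a morphism $I:\ G\to Q$, given concretely by $g\mapsto q_0 g$ with $q_0$ the distinguished point of $Q$. In local coordinates $q^{\bal}=q^{\bal}(x)$, $\bal=1,\dots,\dim Q$, and by \eqref{e21} these satisfy
\[
\xi_i^\beta(x)\,\frac{\p q^{\bal}}{\p x^\beta}=\theta_i^{\bal}(q(x)).
\]
Here the right-regular fields $\xi_i=\tau^R(E_i)$ play the role of $\zeta_{E_i}$, because $G$ acts on $Q$ on the right, compatibly with $T^R$. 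The functions $q^{\bal}(x)$ are functionally independent because $I$ is a submersion: its differential maps the $\xi_i$ onto the $\theta_i$, and the $\theta_i$ span $T_qQ$.

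Next, I take an arbitrary smooth $v(q)$ and set $u(x)=v(q(x))=I^*v$. Since $I^*$ intertwines the algebra representations, \eqref{e20} gives $\xi_X(I^*v)=I^*(\theta_X v)$. Iterating this,
\[
\Delta_2(G)\,u=B^{ij}\xi_i\xi_j\,(I^*v)=I^*\bigl(B^{ij}\theta_i\theta_j v\bigr)=I^*\bigl(\tau^Q(\Lambda)v\bigr),
\]
where the first equality is \eqref{e29}. Lemma 1 gives $\tau^Q(\Lambda)=0$, so $\Delta_2(G)u=0$ for every $v$. Hence each $q^{\bal}$ is a harmonic variable, and together they form a complex in the sense of Definition 2.

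The main obstacle is the applicability of Lemma 1. It needs a non-degenerate (generic) $\lambda$ that admits a polarization $\kk$ which is a subalgebra of $\g^c$ of the right dimension. It also relies on the cited canonical-coordinate construction \eqref{e31} and on $Q$ satisfying \eqref{e22}. I would take these as given from the earlier section and \cite{Do}, \cite{KP}, \cite{BS-2003}, \cite{BS-2009}, noting that polarizations of generic functionals always exist over $\CC$ (Vergne-type polarizations). As a sanity check I would verify the lemma's conclusion for $\Lambda$ directly at second order. The principal symbol $B^{ij}\theta_i^{\alpha}\theta_j^{\beta}$ vanishes by \eqref{e34}. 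The first-order remainder is an invariant vector field on $Q$, which Lemma 1's argument shows equals $\tau^Q(\Lambda_1)=0$. This gives both the eikonal equation \eqref{e4b} and the harmonicity \eqref{e4a} for all $q^{\bal}$, and for mixed pairs $B^{ij}\xi_iq^{\bal}\,\xi_jq^{\bar\beta}=0$.
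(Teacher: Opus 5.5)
Your proposal is correct and follows the paper's route: a polarization $\kk$ of a non-degenerate $\lambda$ makes $\Lambda=B^{ij}E_iE_j$ an identity on $Q=G/K$ by Lemma 1, Proposition 1 gives the morphism $I:G\to Q$ because the isotropy of $G$ acting on itself is trivial, and the chain-rule computation $\Delta_2(G)\,v(q(x))=\tau^Q(\Lambda)v(q)=0$ finishes the proof. You also make explicit two points the paper leaves implicit: the dimension count $\dim Q=\tfrac12(\dim\g-\ind\g)$ from \eqref{e9}, and the functional independence of the $q^{\bal}$, which follows because $dI$ is surjective.
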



\emph{Proof of Theorem 2.} $\blacktriangleleft$
Since the scheme for constructing harmonic variables is also valid in the more general case of homogeneous spaces, we will consider it here.

Let the Laplace operator on a homogeneous space $M=G/H$ have the form:
\begin{equation}\label{e30}
\Delta_2(M)=\tau^M(B^{ij}E_iE_j)=B^{ij}\zeta_i(x)\zeta_j(x),\quad \Lambda=B^{ij}E_iE_j\in \Zg.
\end{equation}
In the next section, we will discuss the conditions under which formula \eqref{e30} holds. If we consider a Lie group as a homogeneous space on which the group acts on itself by right translations, then formula \eqref{e30} in this case coincides with formula \eqref{e29}.

The algorithm for constructing harmonic variables for the Laplace operator \eqref{e30} consists in sequentially solving two problems.

\emph{Problem 1.} For the central element ${\Lambda}=B^{ij}E_iE_j\in\Zg$, choose a subalgebra $\kk$ such that this element ${\Lambda}$ is an identity on the homogeneous space $Q=G/K$, where $K$ is a local Lie group with algebra $\kk$, i.e., $\tau^Q({\Lambda})=0$.

\emph{Problem 2.} Construct in coordinates from equations \eqref{e20} the morphism $I:\ M\to Q$. The functions $q=I(x)$ will constitute a complex of harmonic variables with $\dim Q$ generators.

Let us explain the last statement. Let $q=I(x)$ be a morphism, i.e., equality \eqref{e20} holds. Then for an arbitrary function $v(q)$ we have:
\begin{multline*}
\Delta_2(M)v(q(x))=B^{ij}\zeta_i(x)\zeta_j(x)v(q(x))=B^{ij}\zeta_i(x)v'(q)\zeta_j(x)q(x)=B^{ij}\zeta_i(x)\theta_j(q)v(q)=\\
=B^{ij}\theta_i(q)\theta_j(q)v(q)=\tau^Q({\Lambda})v(q)=0,\quad q=I(x).
\end{multline*}


In the monograph \cite{Diksime}, it is stated that for a non-degenerate linear functional $\lambda\in\gf$, its polarization $\kk$ always exists, which, according to Lemma 1, is exactly the sought subalgebra. Questions of constructing a polarization are discussed, in particular, in \cite{K-S}.

Let us proceed to the solution of Problem 2. In the case of the morphism $I:\ G\to Q$, Proposition 1 imposes no restrictions on the subgroup $K\subset G$, i.e., such a morphism always exists, which completes the proof of the theorem. $\blacktriangleright$

Let us present an algebraic method for constructing complexes of harmonic variables on Lie groups for the Laplace operators \eqref{e29}.

Equations \eqref{e21} for a fixed basis $\{E_i\}$ of the algebra $\g$ in coordinates have the form:
\begin{equation}\label{e311}
\xi_i^j(a)\frac{\p q^\alpha}{\p a^j}=\theta_i^\alpha(q),\quad i=1,\dots,n=\dim\g, \quad \alpha=1,\dots,m=\dim Q=(\dim\g-\ind\g)/2.
\end{equation}
Here $a=(a^1,\dots,a^n)$ are the coordinates of the group element $g_a$ from an open neighborhood of the identity $U_e\subset G$.

Let $x\in Q$ be a point in general position. Differentiate equality \eqref{e15} with respect to the variables $b^j$ at zero and denote $q^\alpha=\psi^\alpha(x,a)$. Then, taking into account the definitions of the fundamental vector fields \eqref{e8}, \eqref{e16}, we obtain system \eqref{e311}. In other words, the action functions of the Lie group on the homogeneous space give the desired morphism. To find it, we use formula \eqref{e16a}, which in our case has the form:
\begin{equation*}
\theta_i^\beta(x)\frac{\p q^\alpha}{\p x^\beta}=
(\Ad_{g_a^{-1}})_i^k\theta_k^\alpha(q)
\end{equation*}
Let us rewrite this system of equations in matrix form:
\begin{equation}\label{e33a}
\begin{pmatrix}
\theta^1_1(x)&\dots &\theta^m_1(x)\\
\theta^1_2(x)&\dots &\theta^m_2(x)\\
\dots&\dots &\dots\\
\theta^1_n(x)&\dots &\theta^m_n(x)\\
\end{pmatrix}
\begin{pmatrix}
r_1^{(\alpha)}\\
\dots\\
r_m^{(\alpha)}
\end{pmatrix} =
\begin{pmatrix}
\tilde{\theta}_1^{(\alpha)}(a,q)\\
\tilde{\theta}_2^{(\alpha)}(a,q)\\
\dots\\
\tilde{\theta}_n^{(\alpha)}(a,q)\\
\end{pmatrix}.
\end{equation}
Here $r_\beta^{(\alpha)}=\p q^\alpha/\p x^\beta,\ \tilde{\theta}_i^{(\alpha)}(a,q)=(\Ad_{g_a^{-1}})_i^k\theta_k^\alpha(q)$.
We will consider formula \eqref{e33a} as a system of $n$ linear inhomogeneous algebraic equations for $m<n$ unknowns $r_\beta^{(\alpha)}$ for each fixed index $\alpha$. It is well known that such a system is consistent if and only if the rank of the coefficient matrix of the unknowns equals the rank of the augmented matrix. This is equivalent to the vanishing of all minors $M_i^{(\alpha)}$ of size $(m+1)$ (we will call these equations for the unknown quantities $q$ \textit{algebraic}):
\begin{equation}\label{e34a}
M_i^{(\alpha)}(x,a,q)=0,\quad i=1,\dots,C_n^{m+1}
\end{equation}
of all augmented matrices $L^{(\alpha)}(x,a,q),\ \alpha=1,\dots,m=\dim Q$:
\begin{equation*}
L^{(\alpha)}(x,a,q)=
\begin{pmatrix}
\theta^1_1(x)&\dots &\theta^m_1(x)&\tilde{\theta}_1^{(\alpha)}(a,q)\\
\theta^1_2(x)&\dots &\theta^m_2(x)&\tilde{\theta}_2^{(\alpha)}(a,q)\\
\dots&\dots &\dots&\dots\\
\theta^1_n(x)&\dots &\theta^m_n(x)&\tilde{\theta}_n^{(\alpha)}(a,q)\\
\end{pmatrix}.
\end{equation*}

Let us discuss the question: can all quantities $q$ be determined from the algebraic equations \eqref{e34a}?

Assume the contrary, i.e., not all quantities $q$ can be found from equations \eqref{e34a}. This means that there exists a coordinate transformation $q^\beta$ after which at least one of the coordinates, say $q^1$, does not enter the coefficients of the vector fields $\theta^\alpha_X(q)=$ \\$=\theta^\alpha_X(q^2,q^3,\dots,q^m)$. In turn, this is equivalent to the existence of an invariant vector field $\upsilon=\p/\p q^1$: $[\theta_X,\upsilon]=0$. However, due to the fulfillment of condition \eqref{e22} (commutativity of the space $Q$), every invariant operator is a Casimir operator. According to Lemma 1, the Casimir operator in the space $Q$ is zero, and thus our assumption is false.

Note the following. System of equations \eqref{e311} is invariant under left translations, i.e., if $q=q(a)$ is some solution, then $\tilde{q}(a)= T^L_{g_b^{-1}}q(a)=q(\varphi(b,a))$ is also a solution. From equality \eqref{e15} it follows that if $q(a)=\psi(x_0,a)$, then $\tilde{q}(a)=\psi(x,a)$, where $x=\psi(x_0,b)$. Thus, to obtain the general solution of system \eqref{e311}, we can choose two paths. The first path: in the algebraic equations \eqref{e34a}, consider the point $x\in Q$ as a point in general position with arbitrary coordinates. The second path: in equations \eqref{e34a}, set $x=x_0$ (for example, $x_0=0$), which significantly simplifies the solution of these equations, and then, in the solution $q(a)$, make the substitution $a\to \varphi(b,a)$.

\emph{Example 2} (Harmonic variable on the group $SO(3)$).
$\blacktriangleleft$ The Lie algebra $so(3)$ of the group $SO(3)$ has the following non-zero basis commutation relations:
\[
[E_1,E_2]=E_3,\quad [E_2,E_3]=E_1,\quad [E_3,E_1]=E_2.
\]
Let us choose local canonical coordinates of the second kind $(\alpha,\beta,\gamma)$ in an open neighborhood of the identity $U_e\subset SO(3)$
\[
g_{\alpha,\beta,\gamma}=\exp({\alpha E_1})\exp({\beta E_2})\exp({\gamma E_3})\in U_e.
\]
In these coordinates, the left-invariant vector fields have the form ($\xi_i=\tau^R(E_i)$):
\begin{eqnarray*}
\xi_1&=&\frac{\cos\gamma}{\cos \beta}\frac{\p}{\p \alpha}+\sin \gamma \pder{}{\beta}-  \cos\gamma\tgg\beta\pder{}{\gamma},\\
\xi_2&=&- \frac{\sin\gamma}{\cos\beta}\pder{}{\alpha} +\cos\gamma\pder{}{\beta}+
\sin\gamma\tgg\beta\pder{}{\gamma},\\
\xi_3&=& \pder{}{\gamma}
\end{eqnarray*}
Let us also write out the inverse matrix of the adjoint representation in these coordinates.
\begin{equation*}
Ad_{g^{-1}_{\alpha,\beta,\gamma}}=
\begin{pmatrix}
\cos\beta\cos\gamma&  \sin\alpha \sin\beta\cos\gamma + \cos\alpha\sin\gamma &
-\cos\alpha \sin\beta\cos\gamma + \sin\alpha\sin\gamma\\
-\cos\beta\sin\gamma&
-\sin\alpha \sin\beta\sin\gamma + \cos\alpha\cos\gamma &
\cos\alpha \sin\beta\sin\gamma + \sin\alpha\cos\gamma
  \\
\sin\beta& -\sin\alpha\cos\beta & \cos\alpha\cos\beta
\end{pmatrix}
\end{equation*}

The Killing tensor in our basis has the form of the identity matrix; correspondingly, the element $\Lambda=E_1^2+E_2^2+E_3^2$ generates the center of the enveloping algebra.
According to formula \eqref{e29}, we have the Laplace operator on the group $SO(3)$ with a bi-invariant metric.
\[
\DD(SO(3))=\xi_1^2+\xi_2^2+\xi_3^2=\frac{1}{\cos^2\beta}\left (\pder{^2}{\alpha^2}+\pder{^2}{\gamma^2}-2\sin\beta\frac{\p}{\p \alpha}\frac{\p}{\p \gamma} \right)+\pder{^2}{\beta^2}-\tgg\beta\pder{}{\beta}.
\]

The algebra $so(3)$ is three-dimensional and has index $\ind so(3)=1$. By definition \eqref{e9}, the polarization $\kk$ is two-dimensional; consequently, according to Theorem 2, we have a complex consisting of one harmonic variable.

Let us fix a non-degenerate linear functional $\lambda=(\lambda_1,0,0)$ and construct its polarization $\kk\subset\g^c$: $\kk=\<E_1,E_2+i E_3\>_\CC$ (here $\<v_1,v_2,\dots\>_{\F}$ is the linear span of vectors $v_1,v_2,\dots$ over the field $\F$).

In the space of smooth functions on the set of right cosets $Q=G^c/\exp(\kk)$, the regular representation $\tau^Q$ of the algebra $\g^c$ acts:
\[
\tau^Q(E_1)=\theta_1(q)=-i\sin q\frac{\p}{\p q},\quad
\tau^Q(E_2)=\theta_2(q)=-i\cos q\frac{\p}{\p q},\quad
\tau^Q(E_3)=\theta_3(q)=\frac{\p}{\p q}.
\]

Let us illustrate Theorem 1 with this example. The central element $\Lambda$ is represented as:
\[
\Lambda=E_1^2+E_2^2+E_3^2=Y_1 (E_1+i)+Y_2 (E_2-iE_3),\quad
 Y_1=E_1\in\kk,\quad Y_2=E_2+i E_3\in\kk.
\]
Thus, this element belongs to the set $\HH\cap\Zg$ and, according to Theorem 1, is an identity (which can be easily verified directly):
\[
\tau^Q(\Lambda)=\theta_1^2(q)+\theta_2^2(q)+\theta_3^2(q)=0.
\]

Using the matrix $Ad_{g^{-1}_{\alpha,\beta,\gamma}}$ and the form of the fields $\xi_i,\ \theta_i$, we construct the matrix $L(x,a,q)$, which has three rows and two columns:
\[
L(x,a,q)=\left(
\begin{array}{cc}
 -i \sin (x) & \sin (\beta )-i \cos (\beta ) \sin (q-\gamma ) \\
 -i \cos (x) & -i (\cos (\alpha ) \cos (q-\gamma )+\sin (\alpha ) (\sin (\beta ) \sin (q-\gamma )-i \cos (\beta ))) \\
 1 & \cos (\alpha ) (\cos (\beta )+i \sin (\beta ) \sin (q-\gamma ))-i \cos (q-\gamma ) \sin (\alpha ) \\
\end{array}
\right)
\]

Equating the three second-order minors of this matrix to zero, we obtain three equations \eqref{e34a} for the single unknown quantity $q$.
These equations can be considered as a system of three linear equations for the two unknown quantities $\cos q,\ \sin q$. From the first two equations we find:
$\cos q=F_1(\alpha,\beta,\gamma,x),\ \sin q=F_2(\alpha,\beta,\gamma,x)$. Next, we substitute these quantities into the third equation and verify that it holds identically. Furthermore, it is easy to verify the identity: $F_1^2+F_2^2=1$.

In the Introduction, we noted that any arbitrary function of a harmonic variable is itself a harmonic variable. Therefore, to avoid dealing with inverse trigonometric functions, we present as the answer an equivalent harmonic variable $F_1(\alpha,\beta,\gamma,x)$. By substitution into the differential equations \eqref{e4a}, \eqref{e4b}, it is easy to verify that for an arbitrary complex value of the parameter $x$, the function
\[
q=F_1(\alpha,\beta,\gamma,x)=\frac{\cos\gamma(\cos x\cos\alpha+ i\sin\alpha)-\sin\gamma(\cos\beta\sin x+\sin\beta(\sin\alpha\cos x-i\cos\alpha))}{\cos\beta (\cos\alpha+i\sin\alpha\cos x)-i\sin\beta\sin x}
\]
is a harmonic variable.$\blacktriangleright$

\section{Harmonic Variables on Homogeneous Spaces}

Let $M=G/H$ be a homogeneous space on which the transformation group $G$ acts effectively, and let $\Lambda_2=B^{ij}E_iE_j$ be an element of the center of the enveloping algebra $U(\g)$, with $B^{ij}=B^{ji}$, $\rank B^{ij}\geq \dim M$. Consider the invariant second-order differential operator $H$.
\[
H=\tau^M(\Lambda_2)=B^{ij}\zeta_i(x)\zeta_j(x),\quad [\zeta_X,H]=0,\quad X\in\g.
\]

We will assume that the coefficients of the second derivatives in this operator are the contravariant components of a metric tensor: $g^{ab}(x)=B^{ij}\zeta_i^a(x)\zeta_j^b(x)$. Let us construct the Laplace operator for this metric:
\[
\DD=\frac{1}{\sqrt{|g|}}\pder{}{x^a}\sqrt{|g|}g^{ab}(x)\pder{}{x^b},\quad
g_{ab}=||g^{ab}||^{-1},\quad g=\det g_{ab}.
\]

The vectors $\zxx$ are Killing vectors for the metric tensor $\textrm{g}=g_{ab}(x)d x^a dx^b$, and consequently $[\zxx,\DD]=0$. Thus, the two invariant operators $H$ and $\DD$ may differ by an invariant first-order differential operator $\zeta_0=\zeta_0^a(x)\p/\p x^a$:
\[
\DD=H+\zeta_0,\quad [\zxx,\zeta_0]=0.
\]
In this paper, we will consider the class of homogeneous spaces for which every invariant first-order differential operator is a Casimir operator, i.e., there exists $\Lambda_0\in \z$ such that $\zeta_0=\tau^M(\Lambda_0)$ (here and below, $\z$ is the center of the algebra $\g$). We will call such homogeneous spaces \textit{special} or, for short, \textit{s-spaces}.
In other words, the Lie algebra $\sm$ of the automorphism group of the homogeneous s-space $M$ consists only of the center $\z$ of the algebra $\g$. A special case of s-spaces are commutative spaces. The latter, in turn, generalize symmetric homogeneous spaces.

For an s-space $M$, the following formula holds:
\begin{equation}\label{e35a}
\DD(M)=\tau^M(\Lambda),\quad \Lambda=B^{ij}E_iE_j+E_0\in{\Zg},\quad E_0\in\z\subset\Zg.
\end{equation}

The criterion for the space $M=G/H$ to belong to the class of s-spaces is the following relation:
\begin{equation}\label{e35}
\sm=\g^{\h}/\h=\z,\quad \g^{\h}=\{X\in\g\mid [X,\h]\subset\h \}.
\end{equation}

Let us explain the last formula. By definition, the isotropy subalgebra $\h$ is an ideal in the algebra $\g^{\h}$, as a result of which $\g^{\h}/\h$ is a well-defined factor-algebra. By construction, the center $\z\subset \g^{\h}$, but due to the effectiveness of the action of the group $G$ on the space $M$, $\h\cap\z=0$, i.e., the factor-algebra always contains the center $\z$ "entirely". In this sense, criterion \eqref{e35} is equivalent to the minimality condition of the factor-algebra $\g^{\h}/\h$, i.e., $\g^\h=\h\oplus \z$.

In this section, we will discuss the construction of harmonic variables for the Laplace equations \eqref{e35a} on homogeneous s-spaces.

By analogy with Theorem 2, we could formulate here a statement about the existence for homogeneous s-spaces of a complex consisting of $(\dim\g-\dim\g^\lambda)/2$ independent harmonic variables, where $\lambda\in\h^{\bot}$. However, for a degenerate linear functional $\lambda$, a polarization does not always exist, and therefore it is not yet possible to prove this statement.
To solve Problem 1 --- constructing the subalgebra $\kk$ and the space $Q\simeq G/K$, we will use Theorem 1.

Let us proceed to the discussion of the algorithm for solving Problem 2 --- constructing the morphism $I: M\to Q\simeq G/K$. Let us represent the system of equations \eqref{e21} in matrix form.
\begin{equation}\label{e36}
\begin{pmatrix}
\zeta^1_1(x)&\dots &\zeta^m_1(x)\\
\zeta^1_2(x)&\dots &\zeta^m_2(x)\\
\dots&\dots &\dots\\
\zeta^1_n(x)&\dots &\zeta^m_n(x)\\
\end{pmatrix}
\begin{pmatrix}
r_1^{(\bar\alpha)}\\
\dots\\
r_m^{(\bar\alpha)}
\end{pmatrix} =
\begin{pmatrix}
{\theta}_1^{\bar\alpha}(q)\\
{\theta}_2^{\bar\alpha}(q)\\
\dots\\
{\theta}_n^{\bar\alpha}(q)\\
\end{pmatrix},\quad \bar\alpha=1,\dots,l=\dim Q.
\end{equation}
Here $r_\beta^{(\bar\alpha)}=\p q^{\bar\alpha}/\p x^\beta$.

If we consider equations \eqref{e36} for each fixed index $\bar\alpha$ as a system of linear algebraic equations for the unknown quantities $r_\beta^{(\bar\alpha)}$, then a necessary and sufficient condition for the existence of solutions is the vanishing of all minors $M_i^{(\bar\alpha)}$ of order $m+1$
\begin{equation}\label{e37}
M_i^{(\bar\alpha)}(x,q)=0,\quad i=1,\dots, C_n^{m+1},\quad \bar\alpha=1,\dots,l=\dim Q;
\end{equation}
of the augmented matrix:
\begin{equation}\label{e38}
L^{(\bar\alpha)}(x,q)=
\begin{pmatrix}
\zeta^1_1(x)&\dots &\zeta^m_1(x)&{\theta}_1^{(\bar\alpha)}(q)\\
\zeta^1_2(x)&\dots &\zeta^m_2(x)&{\theta}_2^{(\bar\alpha)}(q)\\
\dots&\dots &\dots&\dots\\
\zeta^1_n(x)&\dots &\zeta^m_n(x)&{\theta}_n^{(\bar\alpha)}(q)\\
\end{pmatrix}.
\end{equation}
As before, we will call equations \eqref{e37} for the unknown quantities $q=I(x)$ algebraic equations.

It is easy to see that all unknown quantities $q(x)$ can be found from the algebraic equations only in the case of a zero automorphism algebra: $\s(Q)=0$. In the general case, with a special choice of coordinates $q^{\bar{\alpha}}$, system \eqref{e21} takes a triangular form, and thus the morphism $q=I(x)$ can also be found by quadratures.

Let us illustrate the above for a one-dimensional automorphism algebra ($\dim \s(Q)=1$). In this case, there exists one invariant vector field
$$
\tau(q)=\tau^{\ba}(q)\frac{\p}{\p q^{\ba}}:\quad [\tau(q),\theta_X(q)]=0,\quad X\in\g.
$$
Let us change the variables $q$ so that $\tau(q)=\p/\p q^1$. From the commutativity condition follows the independence of the coefficients $\theta^{\ba}_i$ from the variable $q^1$: $\theta^{\ba}_i(q)=\theta^{\ba}_i(q^2,\dots,q^{l})$.
Thus, from the algebraic equations we can find all components $q^{\ba}=q^{\ba}(x)$ except for the quantity $q^1$. The latter is found by integrating the first equation of system \eqref{e21}:
\begin{equation*}
\zxx^\beta(x)\frac{\p q^{1}}{\p x^\beta}=\tx^{1}(q^2(x),\dots,q^l(x)),\quad  X\in\g.
\end{equation*}

\emph{Example 3} (Harmonic variable on the two-dimensional sphere). $\blacktriangleleft$ For methodological purposes, let us consider an example whose result we already presented in the Introduction.

We will represent the two-dimensional sphere as the space of right cosets: $S^2\approx SO(3)/SO(2)$. In the algebra $\g=so(3)$, we choose a basis $\{L_{ij}\} \  (1\leq i< j \leq 3)$ with commutation relations:
\[
[L_{12},L_{13}]=-L_{23},\quad [L_{12},L_{23}]=L_{13},\quad [L_{13},L_{23}]=-L_{12}.
\]
The Lie algebra of the isotropy subgroup $H=SO(2)$ is the algebra $\h=so(2)=\< L_{23}\>_{\R}$.

The generators of the transformation group $G=SO(3)$ on the homogeneous space $M=S^2$ have the standard form ($\zeta_{ij}=\tau^M(L_{ij})$):
\[
\zeta_{12}=\pder{}{\varphi},\quad
\zeta_{13}= -\sin\varphi \cotg \theta\pder{}{\varphi}+ \cos\varphi\pder{}{\theta},\quad
\zeta_{23}= \cos\varphi \cotg \theta\pder{}{\varphi}+ \sin\varphi\pder{}{\theta}.
\]
The Laplace operator on the sphere is a Casimir operator and has the form \eqref{e35a} with $\Lambda_0=0$:
\[
\DD(S^2)=\tau^M(\Lambda)=\zeta_{12}^2+\zeta_{13}^2+\zeta_{23}^2=\left( \frac{1}{\sin \theta}\frac{\p}{\p \theta}\sin \theta \frac{\p}{\p \theta}+
 \frac{1}{\sin^2\theta}\frac{\p^2}{\p \varphi^2}\right).
\]
Here $\Lambda=L_{12}^2+L_{13}^2+L_{23}^2\in \Zg$.

Let us choose a subalgebra $\kk\in\g^c$ such that the element $\Lambda$ belongs to the ideal $\HH$ from Theorem 1: $\kk=\<L_{23},L_{12}+iL_{13}\>_C$. It is easy to compute the generators of the (local) action of the group on the space of right cosets $Q=G^c/\exp(\kk)$.
\[
\theta_{12}=\tau^Q(L_{12})=\pder{}{q},\quad \theta_{13}=\tau^Q(L_{13})=i \cos q \pder{}{q},\quad \theta_{23}=\tau^Q(L_{23})=i\sin q\pder{}{q}.
\]

Let us compose the matrix $L(x,q)$ and compute its determinant (the only minor of order 3):
\[
L=\begin{pmatrix}
1 & 0 & 1 \\
-\ctg \theta\sin \varphi & \cos\varphi & i\cos q \\
\ctg\theta \cos\varphi & \sin\varphi & i\sin q
\end{pmatrix},\quad \det L=-\ctg\theta-i\sin (\varphi-q).
\]
Equating the determinant to zero, we obtain the expression for the harmonic variable \\ $q(\varphi,\theta)=\varphi -  i\arcsinh ( \cotg \theta )$. $\blacktriangleright$


It should be noted that in all the examples considered above, the harmonic variables were expressed in elementary functions. However, in the general case, it is not always possible to solve equations \eqref{e37} explicitly for the unknown quantities $q$ (for instance, this could be a polynomial equation of degree higher than four in the variable $q$ with coefficients depending on $x$). In this case, one has to state that the independent equations of system \eqref{e37} define the harmonic variables $q=q(x)$ implicitly. This is especially characteristic for the case of a multidimensional space $Q$, when we find complexes of harmonic coordinates. Let us provide an example of finding a complex in explicit form.

\emph{Example 4} (Complex of harmonic variables in the symmetric space $SO(2,2)/SO(1,1)\times SO(1,1)$). $\blacktriangleleft$
Let us present the commutation relations of the algebra $\g=so(2,2)=\<E_{ij}\>_{\R}$.
\[
[E_{ij},E_{kl}]=\textrm{g}_{kj}E_{il}- \textrm{g}_{ki}E_{jl} + \textrm{g}_{li}E_{jk} - \textrm{g}_{lj}E_{ik},\quad E_{ij}=-E_{ji},\quad 1\leq i,j,k,l \leq 4.
\]
Here $\textrm{g}=\diag(1,-1,-1,1)$ is a diagonal matrix.

The Lie algebra $\h=so(1,1)\oplus so(1,1)$ of the isotropy subgroup
in our case has the form $\h=\<E_{12},E_{34}\>_\R$. Let us write out the fundamental vector fields in local coordinates on the homogeneous symmetric space $M=SO(2,2)/SO(1,1)\times SO(1,1)$.
\begin{eqnarray*}
\zeta_{12}&=& x_2 \pder{}{x_1}+x_1 \pder{}{x_2} + x_4 \pder{}{x_3}+ x_3 \pder{}{x_4},\\
\zeta_{13}&=&(\frac12+ x_2x_3-x_1x_4)\pder{}{x_1}+
( x_1x_3-x_2x_4)\pder{}{x_2}  +
x_3x_4\pder{}{x_3}+
\frac12(x_3^2+x_4^2-1)\pder{}{x_4},\\
\zeta_{14}&=&(\frac12+ x_2x_3-x_1x_4)\pder{}{x_1}+
( x_1x_3-x_2x_4)\pder{}{x_2}  +
x_3x_4\pder{}{x_3}+
\frac12(x_3^2+x_4^2+1)\pder{}{x_4},\\
\zeta_{23}&=&( x_1x_3-x_2x_4)\pder{}{x_1}+
(\frac12+ x_2x_3-x_1x_4)\pder{}{x_2}
-\frac12(1+x_3^2+x_4^2)\pder{}{x_3}-
x_3x_4\pder{}{x_4},\\
\zeta_{24}&=&( x_1x_3-x_2x_4)\pder{}{x_1}+
(\frac12+ x_2x_3-x_1x_4)\pder{}{x_2}
+\frac12(1-x_3^2-x_4^2)\pder{}{x_3}-
x_3x_4\pder{}{x_4},\\
\zeta_{34}&=& -x_1 \pder{}{x_1}-x_2 \pder{}{x_2} + x_3 \pder{}{x_3}+ x_4 \pder{}{x_4}.
\end{eqnarray*}
Here $\zeta_{ij}=\tau^M(E_{ij})$.

Let us construct a general quadratic element of the center of the algebra $\Ug$:
\[
\Lambda=A  (-E_{12}E_{34}-E_{23}E_{14}+E_{13}E_{24})+
B(E_{12}^2+E_{13}^2-E_{14}^2-E_{23}^2+E_{24}^2+E_{34}^2).
\]
$A,\, B$ are real constants, $A\neq \pm 2 B$.

The Laplace operator has the form \eqref{e35a} (with $\Lambda_0=0$):
\begin{multline*}
\DD(M)=\tau^M(\Lambda)=\left(B(x_1^2+x_2^2)+A x_1x_2\right)\left(\frac{\p^2}{\p x_1^2}+\frac{\p^2}{\p x_2^2}\right)+
\left(A(x_1^2+x_2^2)+4B x_1x_2\right)\frac{\p^2}{\p x_1 \p x_2}+\\
+\frac{A}{2}\frac{\p^2}{\p x_1 \p x_3}-B\frac{\p^2}{\p x_1\p x_4}+
B\frac{\p^2 }{\p x_2\p x_3}-\frac{A}{2}\frac{\p^2}{\p x_2\p x_4}+ 2(2Bx_1+ A x_2 )\pder{}{x_1}+2( A x_1+2 Bx_2)\pder{}{x_2}.
\end{multline*}

Guided by Theorem 1, we choose a subalgebra $\kk$ containing the subalgebra $\h$ such that in the space of cosets $Q=G/\exp(\kk)$, the central element $\Lambda$ is an identity, i.e., $\tau^Q(\Lambda)=0$:
$$
\kk=\<E_{34},E_{12},-E_{13}+E_{14}-E_{23}+E_{24},
E_{13}+E_{14}+E_{23}+E_{24}\>_\R
$$
Now let us write out the fundamental vector fields $\theta_{ij}(q)=\tau^Q(E_{ij})$:
\begin{eqnarray*}
\theta_{12}=(2 q_1^2 q_2-q_1)\pder{}{q_1}-q_2\pder{}{q_2},\quad
\theta_{13}=(q_1 q_2-\frac12)\pder{}{q_1}+\frac12(1-q_2^2)\pder{}{q_2},\\
\theta_{14}=(\frac12-q_1 q_2)\pder{}{q_1}+\frac12(1+q_2^2)\pder{}{q_2},\quad
\theta_{23}=(\frac12+q_1^2-q_1q_2+q_1^2q_2^2)\pder{}{q_1}-
\frac12(1+q_2^2)\pder{}{q_2},\\
\theta_{24}=(-\frac12+q_1^2+q_1q_2-q_1^2q_2^2)\pder{}{q_1}+
\frac12(-1+q_2^2)\pder{}{q_2},\quad
\theta_{34}=q_1\pder{}{q_1}-q_2\pder{}{q_2}.
\end{eqnarray*}

Using the coefficients of the vector fields $\zeta_{ij}^{\alpha}(x),\ \theta_{ij}^{\bar{\alpha}}(q)$, we construct according to formula \eqref{e38} two matrices $L^{(1)}(x,q),\ L^{(2)}(x,q)$, each having six rows and five columns. Next, we equate to zero all 12 fifth-order minors of these matrices. We thus obtain for this case the algebraic equations \eqref{e37}. Let us present all solutions of the obtained algebraic equations for the unknowns $q_1,\; q_2$.
\begin{eqnarray*}
1.\ \{q_1(x),q_2(x)\}&=&\{x_3,x_4\};\\
2.\ \{q_1(x),q_2(x)\}&=&
\left\{\frac{ 1 + (x_1 + x_2) ( x_3 - x_4)}{1 + 2 x_3 (x_1 + x_2)},
x_3+x_4\right\};\\
3.\ \{q_1(x),q_2(x)\}&=&
\left\{\frac{ (x_1 - x_2) (x_3 - x_4)}{-1 + 2 (x_1 - x_2) x_3},
\frac{x_1 - x_2}{1 - (x_1 - x_2) (x_3 + x_4)}
\right\};\\
4.\ \{q_1(x),q_2(x)\}&=&
\left\{
\frac{1 + 2 (x_2 x_3 -
     x_1 x_4) - (x_1^2 - x_2^2) (x_3^2   - x_4^2)}{x_2 -
    x_3 (x_1^2 - x_2^2)},
\frac{1 - (x_1 - x_2) ( x_3 + x_4)}{x_1 - x_2}
    \right\}.
\end{eqnarray*}
(When writing out the complexes of harmonic variables, we applied transformations for simplification: $q_{\bar{\alpha}}\to \Phi_{\bar{\alpha}}(q_1,q_2)$). Note that by acting on the obtained complexes with the transformation group $SO(2,2)$ with generators $\zeta_{ij}^{\alpha}(x)$, we will obtain complexes depending on 6 parameters. $\blacktriangleright$

\section*{Conclusion}

In this paper, we introduced the definition of harmonic variables and complexes. For a certain class of Riemannian spaces, we presented an algebraic algorithm for finding them. However, there are still many unsolved important problems in this area. In particular, the author does not yet know the answer to the question: does every multidimensional Riemannian or pseudo-Riemannian space admit harmonic variables? What is the maximal complex for a given homogeneous space? For example, above we proved Theorem 2 on the existence of a complex for Lie groups with a bi-invariant metric, consisting of $(\dim\g-\ind\g)/2$ harmonic variables, but this does not imply the non-existence of a larger complex. These and other questions are the subject of further research.

In our opinion, the problem of constructing harmonic variables on St\"ackel spaces of various types (spaces in which the Hamilton-Jacobi equation and the Klein-Gordon-Fock equation are solved by the method of separation of variables \cite{B-O}) is also very promising.

The author would like to thank Prof. A.A. Magazev for useful discussions of the results of this work.

\end{document}